\newtheorem{theorem}{Theorem}[section]
\newtheorem{lemma}[theorem]{Lemma}
\newtheorem{definition}{Definition}
\journal{...}
\begin{document}

\begin{frontmatter}

\title{On the existence, uniqueness and stability of solutions of SDEs with state-dependent variable exponent}
\author{Mustafa Avci}

\affiliation{organization={Faculty of Science and Technology, Applied Mathematics\\  Athabasca University},
            city={Athabasca},
            postcode={T9S 3A3},
            state={AB},
            country={Canada}}
\ead{mavci@athabascau.ca (primary) & avcixmustafa@gmail.com}

\begin{abstract}
We study a time-inhomogeneous nonlinear SDE with drift and diffusion governed by state-dependent variable exponents. This framework generalizes models like the geometric Brownian motion (GBM) and the constant elasticity of variance (CEV), offering flexibility to capture complex dynamics while posing analytical challenges. Using a fixed-point approach, we prove existence and uniqueness, analyze higher-order moments, derive asymptotic estimates, and assess stability. Finally, we illustrate an application where Poisson’s equation admits a probabilistic representation via a time-homogeneous nonlinear SDE with state-dependent variable exponents.
\end{abstract}

\begin{keyword} Stochastic process; state-dependent variable exponent; higher-order moments; asymptotic estimate; stability; Banach fixed-point theorem; Feynman–Kac formula.
\MSC[2020] 37H30; 60G07; 60G65; 60H15
\end{keyword}

\end{frontmatter}

\section{Introduction}\label{Sec1}

We study the following time-inhomogeneous nonlinear stochastic differential equation with state-dependent variable exponent functions $p(X(t)), q(X(t))$
\begin{equation}\label{eq.1a}
\begin{cases}
\begin{array}{rll}
dX(t) &= \mu(t) X(t)^{p(X(t))}dt + \sigma(t) X(t)^{q(X(t))}dW(t),\,\, t\in [0,T], \\
X_{0}&=x_0, \quad x_0 \in (0,\infty),\tag{{${\mathcal{GM}}$}}
\end{array}
\end{cases}
\end{equation}
where $W(t)$ is a Brownian motion on the complete probability space $(\Omega, \mathcal{F}, \mathbb{P})$; $p(\cdot), q(\cdot)$ are differentiable, variable exponent functions; and $0<\mu, \sigma$ are bounded real-valued deterministic continuous functions. \\

Stochastic differential equations (SDEs) play a central role in modeling systems influenced by randomness, with applications spanning finance, physics, biology, and engineering \cite{karatzasshreve91,mao07,oksendal03}. Classical models such as the geometric Brownian motion (GBM) and the constant elasticity of variance (CEV) model have been extensively studied (see e.g.,  \cite{bs1973,cr76}). However, these models often assume fixed exponents in drift and diffusion terms, limiting their ability to capture complex state-dependent dynamics.\\

The proposed model \eqref{eq.1a}, as a novel approach, introduces state-dependent variable exponents $p(\cdot)$ and $q(\cdot)$ in both drift and diffusion terms. Unlike classical models with constant elasticity, this formulation allows the coefficients to respond nonlinearly to the current state. Such flexibility significantly enhances the expressive power of SDEs, enabling better modeling of phenomena where volatility and growth rates vary with the state. However, this nonlinearity creates analytical challenges: standard techniques for proving existence and uniqueness often rely on Lipschitz conditions, which are harder to verify under variable exponent structures. Our work addresses these challenges by introducing a new class of functions, $\mathcal{S}$, for variable exponents to ensure the coefficients satisfy the necessary regularity properties.\\
The proposed model \eqref{eq.1a} generalizes several classical models. For example:
\begin{itemize}
    \item If $p(x) = q(x) = 1$  and $\mu(t)=\mu,\, \sigma(t)=\sigma$, \eqref{eq.1a} reduces to the GBM. A key feature of hypothesis $(\mathbf{h_2})$ is that $p(x) \to 1$ and $q(x) \to 1$ as $x \to \infty$. This implies that our generalized model behaves like GBM for large $x$, but exhibits different dynamics for small $x$.
    \item If $p(x) = 1$ and $q(x) \geq 1$ (excluding $0 \leq q(x) < 1$) and $\mu(t)=\mu,\, \sigma(t)=\sigma$, the model reduces to the CEV model.
\end{itemize}
Thus, \eqref{eq.1a} provides a unified framework that encompasses GBM and CEV as special cases while introducing richer dynamics through state-dependent exponents.\\

In this paper, building on the research findings of \cite{avci25-1,avci25-2,avci25-3}, we differently study a time-inhomogeneous model, and obtain the existence and uniqueness of solution by exploiting the idea of the contraction mapping principle, also known as the Banach fixed-point theorem. Additionally, we investigate higher-order moments ($\geq$ second), asymptotic estimates, and the stability of the solution. Lastly, we provide an application where a probabilistic representation of the solution of Poisson’s equation is given via the solution of a time-homogeneous nonlinear stochastic differential equation with state-dependent variable exponent.

\section{Preliminaries and auxiliary results} \label{Sec2}
We introduce a new class of functions, $\mathcal{S}$, for variable exponents to ensure that the model’s coefficients satisfy Lipschitz and linear growth conditions.
\begin{itemize}
   \item [($\mathbf{h_1}$)] $h(\cdot): (0, \infty) \to [1,\infty)$ is a differentiable function satisfying
   \begin{align*}
     1\leq h^-:=\inf_{x> 0} h(x)\quad \text{and} \quad \sup_{x> 0} h(x):=h^+ <\infty.
   \end{align*}
   \item [($\mathbf{h_2}$)]$\lim_{x \to \infty}h(x)=1 \quad \text{and} \quad \limsup_{x \to \infty}(h(x)-1)\log(x)<\infty.$
   \item [($\mathbf{h_3}$)] There exist real numbers $\delta, M_0, C_0>0$ and $\alpha>0$ with $h^+<1+\alpha$ such that
   \begin{equation*}
   |h^{\prime}(x)| \leq M_0 \mathbf{1}_{(0,\delta]}(x) + C_0 x^{-(1+\alpha)} \mathbf{1}_{(\delta,\infty)}(x).
   \end{equation*}
    \item [$(\mathbf{D})$] $\mu, \sigma: [0,T] \to (0,\infty)$ are deterministic continuous functions such that
   \begin{align*}
   &0<\mu^-:=\min_{t \in [0,T]} \mu(t),\quad  \max_{t \in [0,T]} \mu(t):=\mu^+ <\infty.\\
   &0<\sigma^-:=\min_{t \in [0,T]} \sigma(t), \quad \max_{t \in [0,T]} \sigma(t):=\sigma^+ <\infty.
   \end{align*}
\end{itemize}
We say that the function $h(\cdot)$ belongs to the class $\mathcal{S}$ if it satisfies the hypotheses $(\mathbf{h_1})-(\mathbf{h_3})$.

\subsection{Illustrative examples}

We present several examples of state-dependent exponent functions that satisfy assumptions $(\mathbf{h_1})-(\mathbf{h_3})$ and therefore fall within the framework of the model \eqref{eq.1a}. These examples illustrate how the proposed formulation extends classical SDE models by allowing the elasticity of the drift and diffusion terms to vary with the state variable.

\paragraph{Example 1}
Let
\[
p(x)=1+\frac{a}{1+x}, \qquad q(x)=1+\frac{b}{1+x}, \qquad a,b>0 .
\]
Then \eqref{eq.1a} becomes
\[
dX(t)=\mu(t)X(t)^{1+\frac{a}{1+X(t)}}dt
      +\sigma(t)X(t)^{1+\frac{b}{1+X(t)}}dW(t).
\]
Since $p(x),q(x)\to 1$ as $x\to\infty$, the dynamics asymptotically approach the GBM, while for small values of $x$ the model exhibits stronger nonlinear drift and diffusion effects. Such a structure can be useful for modeling systems where growth and variability are amplified when the state variable is small.

\paragraph{Example 2}
Let
\[
p(x)=1, \qquad q(x)=\gamma+\frac{c}{1+x},
\]
where $\gamma>0$ and $c>0$. Then \eqref{eq.1a} becomes
\[
dX(t)=\mu(t)X(t)\,dt
      +\sigma(t)X(t)^{\gamma+\frac{c}{1+X(t)}}dW(t).
\]
In this case the diffusion elasticity varies with the state variable but
approaches the constant value $\gamma$ as $x\to\infty$. Hence the model can be
viewed as a state-dependent generalization of the classical CEV process.

\paragraph{Example 3}
Let
\[
p(x)=1+\frac{a}{1+x^{2}}, \qquad q(x)=1+\frac{b}{1+x^{2}}, \qquad a,b>0 .
\]
Then \eqref{eq.1a} becomes
\[
dX(t)=\mu(t)X(t)^{1+\frac{a}{1+X(t)^{2}}}dt
      +\sigma(t)X(t)^{1+\frac{b}{1+X(t)^{2}}}dW(t).
\]
Here the nonlinear effects are most pronounced near the origin and gradually vanish as the state grows. Consequently the model behaves like a GBM for large values of $x$ while allowing richer nonlinear dynamics for moderate and small values of the state variable.\\

Next, we will continue with some basic concepts of the measure-theoretic probability and the theory of stochastic process.\\

Let $\{\mathcal{F}_t\}_{t \geq 0}$  be a \emph{filtration} denoted by $\mathcal{F}_t$. Then $(\Omega, \mathcal{F}, \mathcal{F}_t, \mathbb{P})$ is a \emph{filtered probability space} satisfying the usual conditions (see, e.g., \cite{mao07,oksendal03}).
A \emph{stochastic process} $ X(t):=\{X(t)\}_{0\leq t\leq T} $, defined for $t \geq 0 $, is said to be \emph{progressively measurable} with respect to a filtration $ \mathcal{F}_t $ if, for every $ t \in [0, T]$, the mapping
\begin{equation}
X: [0,t] \times \Omega \to \mathbb{R}, \quad (s, \omega) \mapsto X(s, \omega)
\end{equation}
is measurable with respect to the product $\sigma$-field $\mathcal{B}([0,t]) \otimes \mathcal{F}_t $, where $ \mathcal{B}([0,t]) $ denotes the Borel $\sigma$-field on $[0,t] $. Note that since the stochastic process $ X(t)$ given by \eqref{eq.1a} is adapted to $\mathcal{F}_t$ and has continuous paths $\mathbb{P}-$almost surely ($\mathbb{P}-$a.s.), it is progressively measurable.
In order to show the existence of a solution, we are going to employ the Banach fixed point theorem in the space
\begin{equation}
\mathcal{M_{T}}=\left\{X(t): X \text{ is progressively measurable, } \int_{\Omega}\int_{0}^{T}|X(t,\omega)|^{2}dt d\mathbb{P}(\omega)<\infty   \right\}
\end{equation}
with the norm
\begin{equation}
\|X\|^{2}_{\mathcal{T}}=\mathbb{E}\left[\int_{0}^{T}|X(t)|^{2}dt\right].
\end{equation}
The normed space $(\mathcal{M_{T}}, \|\cdot\|_{\mathcal{T}})$ is complete since it is a closed subspace of the space $L^{2}([0,T] \times \Omega)$ of functions of two variables $(t, \omega) \in [0,T] \times \Omega$ that are square-integrable with respect to the product measure $d\left(\nu \times \mathbb{P} \right)(t, \omega)=dt d\mathbb{P}(\omega)$, where $\nu$ is Lebesgue measure on $[0,T]$.

\begin{lemma}\label{Lem:2.1}
Assume $X(t) \in \mathcal{M_T}$ and $h(\cdot)$ satisfies $(\mathbf{h_1})$ and $(\mathbf{h_2})$. Then $X^{h}(t):= X(t)^{h(X(t))} \in \mathcal{M_T}$.
\end{lemma}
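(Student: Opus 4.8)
The plan is to verify the two defining properties of $\mathcal{M_T}$ for the process $X^h(t)=X(t)^{h(X(t))}$: progressive measurability and square-integrability with respect to $dt\,d\mathbb{P}$. Progressive measurability is immediate: since $h$ is differentiable on $(0,\infty)$, the map $g:(0,\infty)\to(0,\infty)$ given by $g(x)=x^{h(x)}=\exp\!\big(h(x)\log x\big)$ is continuous, hence Borel measurable; because $X(t)$ is progressively measurable and takes values in the domain $(0,\infty)$ of $h$, the composition $X^h(t)=g(X(t))$ is again progressively measurable, no pathwise regularity being lost under composition with a continuous function.

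The substantive step is to establish a pointwise estimate of the form $x^{h(x)}\le C(1+x)$ for all $x>0$, with $C$ independent of $x$, by splitting the range of $x$. On $(0,1]$, since $h(x)\ge h^-\ge 1$ by $(\mathbf{h_1})$ and $0<x\le 1$, the map $t\mapsto x^t$ is non-increasing, so $x^{h(x)}\le x^{h^-}\le x\le 1$. On $[1,\infty)$ I would write $x^{h(x)}=x\cdot x^{h(x)-1}=x\,\exp\!\big((h(x)-1)\log x\big)$ and control the exponent using $(\mathbf{h_2})$: the function $\varphi(x)=(h(x)-1)\log x$ is continuous on $[1,\infty)$ and satisfies $\limsup_{x\to\infty}\varphi(x)<\infty$, so it is bounded above by some constant $K$ (finite on the compact part by continuity, eventually bounded by the $\limsup$ hypothesis). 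Hence $x^{h(x)-1}\le e^{K}$ and $x^{h(x)}\le e^{K}x$ for $x\ge 1$. Combining both regimes yields the claimed bound $x^{h(x)}\le C(1+x)$.

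Finally I would square and integrate. From the pointwise bound and $(1+x)^2\le 2(1+x^2)$ we obtain $|X^h(t)|^2\le 2C^2\big(1+|X(t)|^2\big)$, so that
\[
\|X^h\|_{\mathcal{T}}^2=\mathbb{E}\!\left[\int_0^T |X^h(t)|^2\,dt\right]\le 2C^2\left(T+\mathbb{E}\!\left[\int_0^T |X(t)|^2\,dt\right]\right)=2C^2\big(T+\|X\|_{\mathcal{T}}^2\big)<\infty,
\]
where finiteness uses the hypothesis $X\in\mathcal{M_T}$. Together with progressive measurability this shows $X^h\in\mathcal{M_T}$.

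I expect the main obstacle to be the uniform bound on $x^{h(x)-1}$ for large $x$, which is exactly where the second part of $(\mathbf{h_2})$ — finiteness of $\limsup_{x\to\infty}(h(x)-1)\log x$ — is indispensable. Without it, even though $h(x)\to 1$, the factor $x^{h(x)-1}$ could outgrow any power of $x$ and destroy square-integrability; the logarithmic calibration in $(\mathbf{h_2})$ is precisely tuned so that $x^{h(x)}$ has at most linear growth, matching the $L^2$ control available from $X\in\mathcal{M_T}$.
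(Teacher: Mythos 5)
Your proposal is correct and takes essentially the same route as the paper: both establish the pointwise linear growth bound $x^{h(x)}\le C(1+x)$ by splitting at $x=1$ (using $h(x)\ge 1$ on $(0,1]$ and the finiteness of $\limsup_{x\to\infty}(h(x)-1)\log x$ from $(\mathbf{h_2})$ for large $x$), then square and integrate against $dt\,d\mathbb{P}$. Your write-up is in fact slightly more careful than the paper's: you explicitly cover the compact intermediate region (where the $\limsup$ bound does not yet apply) via continuity of $(h(x)-1)\log x$, and you address progressive measurability of the composition $g(X(t))$, both of which the paper leaves implicit.
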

\begin{proof} It is enough to apply ($\mathbf{h_2}$). If $0<x\leq 1$, there exists $0<\epsilon\leq 1$ such that
\begin{equation}\label{eq.1c}
x^{h(x)}\leq \epsilon^{h^-}\leq \epsilon^{h^-}(1+x).
\end{equation}
If $1<x<\infty$, there exist real numbers $M_{\infty}, R_{\infty}>0$ such that
\begin{equation}\label{eq.1d}
(h(x)-1)\log(x) \leq M_{\infty}, \quad \forall x >R_{\infty},
\end{equation}
which yields
\begin{equation}\label{eq.1e}
e^{(h(x)-1)\log (x)}\leq e^{M_{\infty}} \Rightarrow x^{h(x)}\leq e^{M_{\infty}}(1+x).
\end{equation}
Therefore, for any $x \in (0,\infty)$
\begin{equation}\label{eq.1e}
x^{h(x)}\leq K(1+x),
\end{equation}
where $K:= \max\{\epsilon^{h^-}, e^{M_{\infty}}\}$. Hence, considering all these along with the fact that $X(t) \in \mathcal{M_T}$,  it follows that
\begin{equation}\label{eq.1g}
\mathbb{E}\left[\int_{0}^{T}X^{2h}(t)dt\right]\leq 2K^2(T+1)\mathbb{E}\left[\int_{0}^{T}X^2(t)ds \right]=2K^2(T+1)\|X\|^{2}_{\mathcal{T}}<\infty.
\end{equation}
\end{proof}

\begin{lemma}\label{Lem:2.2}
Assume that $X_0=x_0 > 0$, and $p(\cdot), q(\cdot)$ satisfy $(\mathbf{h_1})$ and $(\mathbf{h_3})$. Then $X(t)$ remains strictly positive in the state space $(0,\infty)$ $\mathbb{P}-$a.s. for all $t \in [0,T]$. In particular, the boundary $x=0$ is natural; that is, starting from $x_0$, it is never hit in finite time.
\end{lemma}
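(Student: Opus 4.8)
The plan is to treat $X$ as the strong solution that exists and is pathwise unique up to its first exit from compact subintervals of $(0,\infty)$ — a fact guaranteed by the local Lipschitz (indeed $C^1$) regularity of $x\mapsto x^{p(x)}$ and $x\mapsto x^{q(x)}$ on $(0,\infty)$ under $(\mathbf{h_1})$ and $(\mathbf{h_3})$ — and then to rule out attainment of the lower boundary by a logarithmic Lyapunov function combined with a localization argument. Concretely, I would fix a level $R>x_0$, introduce the lower-exit times $\beta_n=\inf\{t\ge 0: X(t)\le 1/n\}$ and the upper barrier $\gamma_R=\inf\{t\ge 0: X(t)\ge R\}$, and work on the stopped interval $[0,\,t\wedge\beta_n\wedge\gamma_R]$, on which $X$ stays in the compact set $[1/n,R]$ where $\log$ is smooth and all coefficients are bounded.

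Set $V(x)=-\log x$. Applying It\^o's formula to $V(X(t))$ on the stopped interval produces the generator term
\[
\mathcal{L}_sV(x)=-\mu(s)\,x^{p(x)-1}+\tfrac12\,\sigma(s)^2\,x^{2(q(x)-1)},
\]
while the martingale part, whose integrand $\sigma(s)X(s)^{q(X(s))-1}$ is bounded on $[1/n,R]$, is a genuine mean-zero martingale. The decisive observation is that $(\mathbf{h_1})$ forces $p(x),q(x)\ge 1$, so that for every $x\in(0,R)$ one has $x^{p(x)-1}\ge 0$ and $x^{2(q(x)-1)}\le\max\{1,R^{2(q^+-1)}\}$; hence $\mathcal{L}_sV(x)\le \tfrac12(\sigma^+)^2\max\{1,R^{2(q^+-1)}\}=:C_R<\infty$ uniformly in $s$ and in $x\in(0,R)$.

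Taking expectations in the stopped It\^o identity then yields $\mathbb{E}\big[V(X(t\wedge\beta_n\wedge\gamma_R))\big]\le V(x_0)+C_R T$. Since $V(1/n)=\log n\to\infty$ while $V$ is bounded below by $-\log R$ on $(0,R)$, restricting to the event $\{\beta_n\le t\wedge\gamma_R\}$ (on which the stopped value equals $\log n$) gives $\mathbb{P}(\beta_n\le T\wedge\gamma_R)\le (V(x_0)+\log R+C_R T)/(\log n+\log R)$, which tends to $0$ as $n\to\infty$. Thus $X$ reaches $0$ before time $T\wedge\gamma_R$ with probability zero; letting $R\to\infty$ and invoking path-continuity (so that $0$ can only be approached from above) shows that $x=0$ is almost surely never attained on $[0,T]$, i.e. $X(t)\in(0,\infty)$ for all $t\in[0,T]$ and the boundary is natural.

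I expect the main obstacle to be the singular and degenerate behavior at the origin: because $-\log X$ and the coefficient $x^{q(x)-1}$ are not controlled as $x\downarrow 0$, It\^o's formula cannot be applied directly on $[0,T]$, and every estimate must be carried out through the localizing times $\beta_n,\gamma_R$ with the passage to the limit justified afterwards. The substantive analytic input is the uniform generator bound $\mathcal{L}_sV\le C_R$, which hinges precisely on the lower bound $h^-\ge 1$ in $(\mathbf{h_1})$; this is exactly the mechanism that fails for the CEV model with exponent below $1$, where the origin becomes attainable, and it is what makes the fixed cutoff $R$ — which cleanly separates genuine attainment of $0$ from a possible excursion toward $+\infty$ — preferable to a full non-explosion analysis.
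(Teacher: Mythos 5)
Your proof is correct, and it takes a genuinely different route from the paper. The paper's own argument is a one-step appeal to the local (differential) form of Feller's non-attainability test cited from Ekstr\"om--L\"otstedt--Tysk: it computes $\mathcal{T}(t,x)=\mu(t)x^{p(x)}-\tfrac{1}{2}\sigma^2(t)\,\partial_x\bigl(x^{2q(x)}\bigr)$ and shows, using $(\mathbf{h_1})$ and $(\mathbf{h_3})$ (the latter is what controls $|q'(x)|$, hence the term $M_0x^{2q(x)}|\log x|$, near the origin), that $\lim_{x\to0^+}\mathcal{T}(t,x)\ge 0$, so that $0$ is non-attainable. You instead give a self-contained Lyapunov/localization argument: $V(x)=-\log x$, two-sided stopping at $\beta_n\wedge\gamma_R$, the uniform generator bound $\mathcal{L}_sV\le C_R$ on $(0,R)$ --- which, as you correctly isolate, rests exactly on $h^-\ge 1$ from $(\mathbf{h_1})$ --- and a Markov-type estimate exploiting $V(1/n)=\log n\to\infty$, followed by $n\to\infty$ and then $R\to\infty$; I checked the generator computation, the martingale property on $[1/n,R]$, the inequality $\mathbb{P}(\beta_n\le T\wedge\gamma_R)\le (V(x_0)+\log R+C_RT)/(\log n+\log R)$, and the final countable-union argument, and all are sound. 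The trade-off: the paper's route is shorter but outsources the probabilistic content to the cited boundary test (whose applicability to this time-inhomogeneous, state-dependent-exponent setting is taken on faith) and requires differentiating the diffusion coefficient, which is precisely where $(\mathbf{h_3})$ enters. Your route never differentiates the coefficients: $(\mathbf{h_3})$ is used only to obtain local Lipschitz coefficients and hence a local strong solution up to exit from $[1/n,R]$ --- a well-posedness point the paper glosses over --- while the core estimate needs only $(\mathbf{h_1})$ and $(\mathbf{D})$; in addition you get a quantitative decay rate of order $1/\log n$ for the probability of reaching level $1/n$, the time-dependence of $\mu,\sigma$ is handled trivially since your bound is uniform in $s$, and the cutoff $\gamma_R$ cleanly decouples attainability of $0$ from non-explosion (which both proofs leave to Theorem~3.1). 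Your closing remark that the mechanism fails when the exponent drops below $1$ (CEV) correctly identifies the sharp role of $(\mathbf{h_1})$; like the paper, you prove non-attainability of $0$, which is all the lemma's conclusion means by ``natural''.
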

\begin{proof}
We use the local (differential) form of Feller’s non-attainability test \cite{elt09} to analyze the behavior of the processes $X(t)$ at the boundary of its state space $(0,\infty)$.
For a general SDE
$$
dX(t) = \mu\left(t, X(t)\right)dt + \sigma\left(t, X(t)\right)dW(t)
$$
if
\begin{equation}\label{eq.2}
\lim_{x \to 0^+}\left(\mu(t,x)-\frac{1}{2}\frac{\partial \sigma^2}{\partial x}(t,x)\right)\geq 0
\end{equation}
holds, then the boundary $X(t)=0$ is non-attainable for the process $X(t)$ given that $x_0>0$. Now, we apply \eqref{eq.2} to \eqref{eq.1a}.
Then, for $0<x\leq\delta<1$,
\begin{align}\label{eq.2a}
\mathcal{T}(t,x)=\mu(t) x^{p(x)}-\frac{\sigma^2(t)}{2}\frac{\partial  x^{2q(x)}}{\partial x}\geq \mu^- x^{p(x)}-(\sigma^+)^2 \left(M_0x^{2q(x)}|\log(x)|+x^{2q(x)-1}q^+\right).
\end{align}
Using the assumptions $(\mathbf{h_1})$ and $(\mathbf{h_3})$, it follows
\begin{align}\label{eq.2b}
\lim_{x \to 0^+}\mathcal{T}(t,x)& \geq \lim_{x \to 0^+} \left(\mu^- x^{p(x)}-(\sigma^+)^2 \left(M_0x^{2q(x)}|\log(x)|+x^{2q(x)-1}q^+\right)\right)= 0.
\end{align}
Hence $X(t)>0$ $\mathbb{P}-$a.s. for all $t \in [0,T]$, which means \eqref{eq.1a} is well-defined.
\end{proof}

\begin{lemma}\label{Lem:2.3}\cite{mao07}
Let $m\geq 2$ be a real number. Let $f \in \mathcal{M_{T}}$ such that
\begin{align}\label{eq.2ba}
\mathbb{E}\left[\int_{0}^{T}|f(s)|^{m} ds\right]<\infty.
\end{align}
Then
\begin{align}\label{eq.2ba}
\mathbb{E}\left[\left|\int_{0}^{T}f(s) dW(s)\right|^{m}\right]\leq \left(\frac{m(m-1)}{2}\right)^{\frac{m}{2}}\,T^{\frac{m-2}{2}}\,\mathbb{E}\left[\int_{0}^{T}|f(s)|^{m} ds\right].
\end{align}
In particular, for $m=2$, there is equality, i.e. \eqref{eq.2ba} gives It\^{o} isometry.
\end{lemma}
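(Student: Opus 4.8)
The plan is to realize the Itô integral $x(t) := \int_0^t f(s)\,dW(s)$ as a continuous (local) martingale and apply Itô's formula to the function $\phi(y) = |y|^m$. The point that makes this clean for $m \geq 2$ is that $\phi$ is $C^2$: both $\phi'(y) = m|y|^{m-1}\mathrm{sgn}(y)$ and $\phi''(y) = m(m-1)|y|^{m-2}$ are continuous, so no mollification is needed. Itô's formula then gives
\[
|x(t)|^m = m\int_0^t |x(s)|^{m-1}\mathrm{sgn}(x(s))\,f(s)\,dW(s) + \frac{m(m-1)}{2}\int_0^t |x(s)|^{m-2}|f(s)|^2\,ds,
\]
and taking expectations should annihilate the stochastic-integral term, leaving the identity $\mathbb{E}|x(t)|^m = \frac{m(m-1)}{2}\,\mathbb{E}\int_0^t |x(s)|^{m-2}|f(s)|^2\,ds$.

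Next I would estimate the right-hand side by Hölder's inequality on the product measure $ds\,d\mathbb{P}$ with conjugate exponents $\frac{m}{m-2}$ and $\frac{m}{2}$ (chosen so that $|x|^{m-2}$ lands in $L^{m/(m-2)}$ and $|f|^2$ in $L^{m/2}$, and $\frac{m-2}{m}+\frac{2}{m}=1$):
\[
\mathbb{E}\!\int_0^t |x(s)|^{m-2}|f(s)|^2\,ds \leq \Big(\mathbb{E}\!\int_0^t |x(s)|^m\,ds\Big)^{\frac{m-2}{m}}\Big(\mathbb{E}\!\int_0^t |f(s)|^m\,ds\Big)^{\frac{2}{m}}.
\]
Since $x$ is a martingale and $\phi$ is convex, $|x(s)|^m$ is a submartingale, so $\mathbb{E}|x(s)|^m \leq \mathbb{E}|x(t)|^m$ for $s \leq t$, whence $\mathbb{E}\int_0^t |x(s)|^m\,ds \leq t\,\mathbb{E}|x(t)|^m$. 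Writing $B := \mathbb{E}|x(T)|^m$ and $C := \mathbb{E}\int_0^T |f(s)|^m\,ds$ and specializing to $t=T$, the displays combine into $B \leq \frac{m(m-1)}{2}\,T^{\frac{m-2}{m}}\,B^{\frac{m-2}{m}}\,C^{\frac{2}{m}}$; dividing by $B^{(m-2)/m}$ and raising to the power $m/2$ yields exactly the stated constant $\big(\frac{m(m-1)}{2}\big)^{m/2}T^{(m-2)/2}$. The case $m=2$ is the degenerate one, where $\phi''\equiv m(m-1)$ is constant and Hölder is an equality, recovering the Itô isometry.

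The delicate point — and the main obstacle — is justifying the two steps I glossed over: that the stochastic-integral term has zero expectation, and that $B=\mathbb{E}|x(T)|^m$ is finite so that dividing by $B^{(m-2)/m}$ is legitimate. A priori the integrand $|x(s)|^{m-1}f(s)$ need not be square-integrable, so the correct procedure is to introduce the localizing stopping times $\tau_n = \inf\{t : |x(t)|\geq n\}\wedge T$, run the whole argument with $t$ replaced by $t\wedge\tau_n$ (where the stopped integrand is bounded, the martingale term genuinely vanishes, and $B_n := \mathbb{E}|x(T\wedge\tau_n)|^m \leq n^m<\infty$), derive the inequality for $B_n$ uniformly in $n$, and then let $n\to\infty$. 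Optional stopping makes $B_n$ nondecreasing in $n$, so $\sup_n B_n \leq \big(\frac{m(m-1)}{2}\big)^{m/2}T^{(m-2)/2}C<\infty$; since $x$ has continuous paths, $x(T\wedge\tau_n)\to x(T)$ $\mathbb{P}$-a.s., and Fatou's lemma passes the bound to $B$ itself, simultaneously securing finiteness and the degenerate case $B=0$. Since the statement is quoted from \cite{mao07}, one may alternatively simply invoke that reference, but the localization argument above is the self-contained route.
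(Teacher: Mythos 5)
Your proof is correct. The paper itself gives no proof of this lemma --- it is quoted directly from \cite{mao07} --- and your argument (It\^{o}'s formula for $|y|^m$, which is $C^2$ precisely because $m\geq 2$; H\"{o}lder with conjugate exponents $\tfrac{m}{m-2}$ and $\tfrac{m}{2}$; the submartingale bound $\mathbb{E}\int_0^t|x(s)|^m ds\leq t\,\mathbb{E}|x(t)|^m$; and localization by $\tau_n$ to justify both the vanishing of the stochastic-integral term and the division by $B^{(m-2)/m}$) is exactly the standard proof of this inequality in that reference, so your reconstruction matches the cited source's approach.
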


\begin{lemma}\label{Lem:2.4}\cite{mao07}
Let $\alpha, \beta, T$ be any positive numbers. Let $g \in \mathcal{L}_{2}([0,T];\mathbb{R})$ (i.e. the family of $\mathbb{R}$-valued $\mathcal{F}_{t}$-adapted processes) such that
\begin{align}\label{eq.2bc}
\mathbb{P}\left\{\int_{t_0}^{T}|g(t)|^{2}dt<\infty \right\}=1 \quad \text{for all} \quad 0\leq t_0 < T.
\end{align}
Then
\begin{align} \label{eq.2bd}
\mathbb{P}\left\{\sup_{t\leq T} \left[\int_{t_0}^t g(s)dW(s)-\frac{\alpha}{2} \int_{t_0}^t |g(s)|^2 ds\right]>\beta \right\}\leq e^{-\alpha \beta}.
\end{align}
\end{lemma}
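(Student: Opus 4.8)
The plan is to recognize the bracketed quantity as the logarithm of a stochastic (Doléans–Dade) exponential and then bound its supremum with a maximal inequality for nonnegative supermartingales. First I would introduce, for $t_0 \le t \le T$,
\[
Z_\alpha(t) = \exp\!\left(\alpha \int_{t_0}^{t} g(s)\,dW(s) - \frac{\alpha^2}{2}\int_{t_0}^{t} |g(s)|^2\,ds\right),
\]
which is well defined on the full-probability event $\{\int_{t_0}^{T}|g(s)|^2\,ds<\infty\}$ guaranteed by \eqref{eq.2bc}. By It\^o's formula $Z_\alpha$ satisfies $dZ_\alpha(t)=\alpha\,Z_\alpha(t)\,g(t)\,dW(t)$ with $Z_\alpha(t_0)=1$, so it is a nonnegative continuous local martingale. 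The key algebraic step is that multiplying the bracket by $\alpha>0$ produces exactly $\log Z_\alpha(t)$, namely
\[
\alpha\!\left[\int_{t_0}^{t} g(s)\,dW(s) - \frac{\alpha}{2}\int_{t_0}^{t}|g(s)|^2\,ds\right] = \log Z_\alpha(t).
\]
Hence, by monotonicity of the exponential, the event to be estimated coincides with $\{\sup_{t\le T}\log Z_\alpha(t)>\alpha\beta\}=\{\sup_{t\le T}Z_\alpha(t)>e^{\alpha\beta}\}$.

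Next I would upgrade $Z_\alpha$ from a local martingale to a genuine supermartingale. Choosing a localizing sequence $\tau_n=\inf\{t\ge t_0:\int_{t_0}^{t}|g(s)|^2\,ds\ge n\}\wedge T$, each stopped process $Z_\alpha(\cdot\wedge\tau_n)$ is a true martingale with $\mathbb{E}[Z_\alpha(t\wedge\tau_n)]=1$; letting $n\to\infty$ and applying Fatou's lemma to the conditional expectations yields $\mathbb{E}[Z_\alpha(t)\mid\mathcal{F}_s]\le Z_\alpha(s)$ and, in particular, $\mathbb{E}[Z_\alpha(t)]\le 1$. Thus $Z_\alpha$ is a nonnegative, continuous supermartingale started at $1$.

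Finally I would invoke the maximal inequality for nonnegative right-continuous supermartingales: for every $\lambda>0$,
\[
\mathbb{P}\!\left\{\sup_{t\le T} Z_\alpha(t)\ge\lambda\right\}\le \frac{\mathbb{E}[Z_\alpha(t_0)]}{\lambda}=\frac{1}{\lambda}.
\]
Taking $\lambda=e^{\alpha\beta}$ and using $\{\sup_{t\le T}Z_\alpha(t)>e^{\alpha\beta}\}\subseteq\{\sup_{t\le T}Z_\alpha(t)\ge e^{\alpha\beta}\}$ gives the desired bound $e^{-\alpha\beta}$.

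The hard part is the supermartingale step, precisely because hypothesis \eqref{eq.2bc} provides only pathwise square-integrability of $g$, not the $L^2(\Omega\times[0,T])$ control that would make $Z_\alpha$ a true martingale (on which a strict identity $\mathbb{E}[Z_\alpha(t)]=1$ would rest). The localization-plus-Fatou argument is exactly what circumvents this gap; the remaining care is to confirm that the stopped integrals are $L^2$-bounded so that each $Z_\alpha(\cdot\wedge\tau_n)$ is a martingale, and that the maximal inequality applies thanks to the continuity of the paths of $Z_\alpha$.
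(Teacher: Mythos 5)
The paper itself gives no proof of this lemma: it is quoted directly from Mao's book \cite{mao07} (the exponential martingale inequality), so your attempt can only be compared against the standard argument from that reference --- which is essentially the route you take: recognize the Dol\'eans--Dade exponential, reduce the event to $\{\sup_{t\le T} Z_\alpha(t)\ge e^{\alpha\beta}\}$, and apply a maximal inequality after localization. Your overall structure is correct, and the algebraic identification $\alpha\bigl[\int_{t_0}^t g\,dW-\tfrac{\alpha}{2}\int_{t_0}^t|g|^2ds\bigr]=\log Z_\alpha(t)$ together with the supermartingale maximal inequality does yield the bound $e^{-\alpha\beta}$.

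The one step you must tighten is the localization. With your stopping times $\tau_n=\inf\{t\ge t_0:\int_{t_0}^t|g|^2\,ds\ge n\}\wedge T$, the stopped process satisfies $Z_\alpha(t\wedge\tau_n)=1+\alpha\int_{t_0}^{t\wedge\tau_n}Z_\alpha(s)g(s)\,dW(s)$, and the integrand here is $Z_\alpha g$, not $g$: capping $\int_{t_0}^{\cdot}|g|^2\,ds$ at $n$ does not bound $Z_\alpha$ itself (the stochastic integral in its exponent is not controlled by $\tau_n$), so your closing claim that ``the stopped integrals are $L^2$-bounded'' is not automatic, and the true-martingale property of $Z_\alpha(\cdot\wedge\tau_n)$ does not follow as stated. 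Three standard repairs: (i) invoke Novikov's criterion for the stopped exponential, which applies because $\mathbb{E}\bigl[\exp\bigl(\tfrac{\alpha^2}{2}\int_{t_0}^{\tau_n}|g|^2\,ds\bigr)\bigr]\le e^{\alpha^2 n/2}<\infty$; (ii) enlarge the stopping rule, as Mao does, to $\tau_n=T\wedge\inf\{t:\,|\int_{t_0}^t g\,dW|+\int_{t_0}^t|g|^2\,ds\ge n\}$, which forces $Z_\alpha\le e^{\alpha n}$ on the stopped interval and makes the integrand genuinely square-integrable; or (iii) bypass explicit sequences entirely by citing the standard fact that a nonnegative continuous local martingale is a supermartingale (Fatou along any localizing sequence, which exists by the definition of local martingale). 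With any of these the proof closes; note also that Mao's own variant applies Doob's inequality to each stopped true martingale and then lets $n\to\infty$ in the probability estimate, whereas you pass to the limit first to get the supermartingale property --- both orders of operations are valid.
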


\section{The main result} \label{Sec3}
\begin{theorem}\label{Thrm:3.1}
Assume that $p(\cdot), q(\cdot) \in \mathcal{S}$ and condition $(\mathbf{D})$ holds. Also, suppose the initial condition $X_0=X(0)=x_0$ is $\mathcal{F}_0$-measurable, independent of $W(t)$, and has a finite second moment. Then there exists a unique global solution $X(t)$ of \eqref{eq.1a} with continuous paths satisfying $\mathbb{E}\left[\int_{0}^{T}|X(t)|^{2}dt\right]<\infty$.
\end{theorem}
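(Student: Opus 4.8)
The plan is to recast \eqref{eq.1a} in integral form and to solve it by the Banach fixed-point theorem in the complete space $(\mathcal{M_{T}}, \|\cdot\|_{\mathcal{T}})$. Concretely, I would introduce the Picard operator
\[
(\Phi X)(t) = x_0 + \int_0^t \mu(s)\, X(s)^{p(X(s))}\,ds + \int_0^t \sigma(s)\, X(s)^{q(X(s))}\,dW(s),
\]
and observe that a fixed point of $\Phi$ is precisely a solution of \eqref{eq.1a}. Since the coefficient maps $x \mapsto x^{p(x)}$ and $x \mapsto x^{q(x)}$ are defined only on $(0,\infty)$ and vanish as $x \to 0^+$, I would first extend them continuously to all of $\mathbb{R}$ (for instance by zero on $(-\infty,0]$) so as to keep them globally Lipschitz, solve the extended equation, and then invoke Lemma \ref{Lem:2.2} to conclude that the solution stays in $(0,\infty)$ $\mathbb{P}$-a.s., hence solves \eqref{eq.1a}.

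The analytic heart of the argument is to establish two estimates, valid for every $h \in \mathcal{S}$ and $G_h(x) := x^{h(x)}$: a linear growth bound $|G_h(x)| \leq K(1+|x|)$ and a global Lipschitz bound $|G_h(x) - G_h(y)| \leq L\,|x-y|$. The growth bound is exactly the content of Lemma \ref{Lem:2.1}. For the Lipschitz bound I would show that $G_h$ has a bounded derivative on $(0,\infty)$, namely
\[
G_h'(x) = h(x)\,x^{h(x)-1} + h'(x)\,x^{h(x)}\log x .
\]
The first term is bounded because $1 \leq h(x) \leq h^+$ and, by $(\mathbf{h_2})$, $x^{h(x)-1} = e^{(h(x)-1)\log x}$ remains bounded as $x \to \infty$ while $x^{h(x)-1} \leq 1$ for $0 < x \leq 1$. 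Controlling the second term is the main obstacle, and it is precisely where the class $\mathcal{S}$ is designed to help: on $(0,\delta]$ the bound $|h'(x)| \leq M_0$ combines with $x^{h(x)}|\log x| \to 0$, whereas on $(\delta,\infty)$ hypothesis $(\mathbf{h_3})$ gives $|h'(x)| \leq C_0 x^{-(1+\alpha)}$, so that the product is dominated by $x^{h(x)-1-\alpha}|\log x|$, which tends to $0$ at infinity exactly because $h^+ < 1+\alpha$.

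With these two estimates available, the remaining steps are standard. Well-definedness $\Phi : \mathcal{M_{T}} \to \mathcal{M_{T}}$ follows from the linear growth bound together with Lemma \ref{Lem:2.1}, the Cauchy–Schwarz inequality for the drift integral, and the It\^{o} isometry (Lemma \ref{Lem:2.3} with $m=2$) for the stochastic integral, using the finite second moment of $x_0$. For the contraction property I would estimate $\|\Phi X - \Phi Y\|_{\mathcal{T}}^2$ by splitting it into drift and diffusion parts and applying the Lipschitz bound, obtaining a constant multiple of $\|X-Y\|_{\mathcal{T}}^2$. Since that constant grows with $T$, I would not expect $\Phi$ to contract on all of $[0,T]$ directly; instead I would either pass to an equivalent Bielecki-type weighted norm $\|X\|_\lambda^2 = \mathbb{E}\left[\int_0^T e^{-\lambda t}|X(t)|^2\,dt\right]$ with $\lambda$ chosen large, or first solve on a short interval $[0,T_1]$ and concatenate finitely many such solutions to cover $[0,T]$.

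The Banach fixed-point theorem then yields a unique $X \in \mathcal{M_{T}}$ with $\Phi X = X$; continuity of paths follows from the $\mathbb{P}$-a.s. continuity of the It\^{o} integral and of the Lebesgue integral in $t$, and strict positivity from Lemma \ref{Lem:2.2}. Uniqueness in $\mathcal{M_{T}}$ is immediate from the fixed-point theorem, and pathwise uniqueness can be recovered by a Grönwall argument resting on the same Lipschitz bounds.
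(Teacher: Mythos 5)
Your proposal is correct and follows the same skeleton as the paper's proof: the same Picard operator $\Phi$ on the same space $(\mathcal{M_{T}}, \|\cdot\|_{\mathcal{T}})$, well-definedness of $\Phi$ via the linear growth bound of Lemma \ref{Lem:2.1} together with Cauchy--Schwarz and the It\^{o} isometry, a contraction estimate from a global Lipschitz bound, and a fixed point obtained on a short interval and then continued by concatenation (the paper's Steps 1--3; its Step 4 re-verifies the second-moment bound with stopping times and Fatou, which your concatenation argument subsumes). You genuinely diverge in two places, both to your advantage. First, the Lipschitz estimate for $G_h(x)=x^{h(x)}$, which the paper does not prove but imports as inequality \eqref{eq.7} from Lemma 3.1\,$(i)$ of \cite{avci25-2}, you derive in-line by bounding $G_h'(x) = h(x)x^{h(x)-1} + h'(x)x^{h(x)}\log x$, splitting at $\delta$ exactly as $(\mathbf{h_3})$ is designed for and using $h^+ < 1+\alpha$ to control the tail term; this computation is correct and makes the argument self-contained. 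Second, you observe that $\Phi$ must act on arbitrary, possibly negative, processes in $\mathcal{M_{T}}$, whereas $x \mapsto x^{p(x)}$ is defined only on $(0,\infty)$; your fix --- extend the coefficients by zero to $(-\infty,0]$ (the extension stays Lipschitz since $x^{h(x)} \leq x$ for $0<x\leq 1$), solve the extended SDE, and then invoke Lemma \ref{Lem:2.2} to conclude the solution never leaves $(0,\infty)$ --- addresses a point the paper passes over silently. The Bielecki weighted-norm alternative you mention is equivalent in effect to the paper's small-interval choice of $T_*$ and would let you dispense with the concatenation step entirely.
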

\begin{proof}
Let us fix $T>0$. Let $\Phi$ be the function that maps a process $X(t)$ into a process $\Phi X(t)$ defined by
\begin{equation} \label{eq.1b}
\Phi X(t) = x_0 + \int_0^t  \mu(s) X^{p}(s)ds + \int_0^t \sigma(s) X^{q}(s)dW(s).
\end{equation}
In this context, the solution of \eqref{eq.1a} is a process $X(t)$ that satisfies the equation
\begin{equation}
\Phi X(t)=X(t),
\end{equation}
that is, it is a fixed point of $\Phi$. We shall present the proof in four steps.\\
\textbf{Step 1.} Let $X(t) \in \mathcal{M_{T}}$. Then
\begin{equation} \label{eq.4}
(\Phi X(t))^2 \leq 3x^2_0+ 3(\mu^+)^2\left(\int_0^t  X^{p}(s) ds\right)^2 + 3(\sigma^+)^2\left(\int_0^t X^{q}(s)dW(s)\right)^2.
\end{equation}
Taking the expectation and integrating with respect to $t$ we obtain
\begin{align} \label{eq.5}
\int_{0}^{T}\mathbb{E}\left[(\Phi X(s))^2\right]dt &\leq 3T\mathbb{E}[x^2_0]+ 3(\mu^+)^2\int_{0}^{T}\mathbb{E}\left[\left(\int_0^t  X^{p}(s) ds\right)^2\right]dt \nonumber \\
&+ 3(\sigma^+)^2\int_{0}^{T}\mathbb{E}\left[\left(\int_0^t X^{q}(s)dW(s)\right)^2\right]dt.
\end{align}
Now, applying the Cauchy-Schwarz inequality to the second term, the It\^o isometry to the third term (since $X^{q}(t)\in \mathcal{M_T}$) in \eqref{eq.5}, and using Lemma \ref{Lem:2.1}, we obtain
\begin{align} \label{eq.6}
\int_{0}^{T}\mathbb{E}\left[(\Phi X(s))^2\right]dt &\leq 3T\mathbb{E}[x^2_0]+ 3[(\mu^+)^2+(\sigma^+)^2] K^2\|X\|^{2}_{\mathcal{T}}\int_{0}^{T} 2t(t+1)dt<\infty.
\end{align}
Since $(\Phi X(s))^2$ is measurable on $[0,T] \times \Omega$, by the Fubini-Tonelli theorem, one concludes that $\Phi : \mathcal{M_{T}} \to \mathcal{M_{T}}$.
Note that if one follows similar arguments as with the proof of Lemma 3.1\,$(i)$ of \cite{avci25-2} and consider $(\mathbf{D})$, it is straightforward to show that there exists a constant $L=L(\mu_+,\sigma_+)>0$ such that $\forall x,y\in (0,\infty), t \in [0,T]$, it holds:
\begin{equation}\label{eq.7}
   |\mu(t)(x^{p(x)}-y^{p(y)})| +|\sigma(t)(x^{q(x)}-y^{q(y)})| \leq L|x-y|,
\end{equation}
where  $\mu_{+}=\max\{\mu^-, \mu^+ \}$ and $\sigma_{+}=\max\{\sigma^-, \sigma^+ \}$. Let $X(t),Y(t) \in \mathcal{M_{T}}$. Then applying the Cauchy-Schwarz inequality and the It\^o isometry alon with \eqref{eq.7} gives
\begin{align} \label{eq.8}
\|\Phi X-\Phi Y\|^{2}_{\mathcal{T}}& \leq 2\int_{0}^{T}\left(\mu_{+}^2 \mathbb{E}\left[\left(\int_{0}^{t} \Delta(s) ds\right)^{2}\right]+ \sigma_{+}^2 \mathbb{E}\left[\left(\int_{0}^{t} \Delta(s) dW(s)\right)^{2}\right]\right)dt \nonumber \\
& \leq c^2 \|X-Y\|^{2}_{\mathcal{T}},
\end{align}
where $\Delta(t):=X(t)-Y(t)$ and $c=\sqrt{2L^2T(T\mu_{+}^2+\sigma_{+}^2)}$. Finally, if we choose $0<T_{*}\leq T$ so that $c <1$, it turns out that $\Phi$ is a contraction from $\mathcal{M_{T_*}}$ into $\mathcal{M_{T_*}}$, i.e.
\begin{align} \label{eq.9}
\|\Phi X-\Phi Y\|_{\mathcal{T_*}}& \leq c \|X-Y\|_{\mathcal{T_*}}.
\end{align}
\textbf{Step 2.}
Let $X_{n}(t)$ be a sequence in $\mathcal{M_{T_*}}$. Define the sequence
$$X_{n+1}(t)=\Phi X_{n}(t) \quad \text{with} \quad X_{1}(t)=\Phi X_{0}(t).$$
Then consecutive approximations and induction lead to
\begin{align} \label{eq.11}
\|X_{n+1}-X_{n}\|_{\mathcal{T_*}}\leq c^n\|X_{1}-X_{0}\|_{\mathcal{T_*}}.
\end{align}
Thus, for $n<m$
\begin{align} \label{eq.12}
\|X_{m}-X_{n}\|_{\mathcal{T_*}}\leq (c^{m-1}+c^{m-2}+\cdots +c^n)\|X_{1}-X_{0}\|_{\mathcal{T_*}}.
\end{align}
Since $c<1$, the sum of geometric series $\sum_{k=n}^{m-1}c^k$ tends to $0$ as $n,m \to \infty$. Thus, $X_{n}$ is a Cauchy sequence in $\mathcal{M_{T_*}}$. Since $\mathcal{M_{T_*}}$ is complete and $\Phi$ is continuous, passing to the limit in $X_{n+1}(t)=\Phi X_{n}(t)$ gives $X(t)=\Phi X(t)$. Thus, $X(t)$ is a solution of \eqref{eq.1a} in $\mathcal{M_{T_*}}$. This solution is unique. Argue otherwise and assume that $\Phi$ has another fixed point, $Y(t) \in \mathcal{M_{T_*}}$ such that $X(t)\neq Y(t)$ $\mathbb{P}-$a.s. Then
\begin{align} \label{eq.14}
\|\Phi X-\Phi Y\|_{\mathcal{T_*}}& < \|X-Y\|_{\mathcal{T_*}}=\|\Phi X-\Phi Y\|_{\mathcal{T_*}},
\end{align}
which is a contradiction.\\
\textbf{Step 3.}
We already solved the equation on $[0,T_*]$ with initial condition $X(0)$. Now we want to ensure that the solution can be continued to $[T_*,2T_*]$, then $[2T_*,3T_*]$, and so on, until we cover $[0,T]$.
In doing so, by applying the same arguments as before, we obtain for $t \leq T_*$
\begin{align} \label{eq.15}
\mathbb{E}\left[X^2(t)\right] &\leq 3\mathbb{E}[x^2_0]+ 3[2(T+1)K^2(\mu^2_+ + \sigma^2_+)]\int_{0}^{t} \mathbb{E}[X^2(s)]ds.
\end{align}
Applying the Gronwall lemma shows that $\mathbb{E}\left[X^2(T_*)\right]$ is finite; that is, at $t=T_*$, $X(T_*)$ doesn't explode. Now, if we use $X(T_*)$ as the initial condition for the next interval $[T_*,2T_*]$ and repeat the argument for each interval until $nT_*\geq T$, we obtain the global existence of the solution on $[0,T]$, i.e. the solution is in $\mathcal{M_{T}}$. \\
\textbf{Step 4.}
Lastly, we will show that a solution of \eqref{eq.1a} always stays in the class $\mathcal{M_{T}}$   for all $t \in [0,T]$. For integer $n\geq 1$, define the stopping time
\begin{equation}\label{eq.16}
\tau_n=T \wedge \inf\{ t \geq 0: |X(t)|\geq n\}.
\end{equation}
Then, $\tau_n \uparrow T$ $\mathbb{P}-$a.s. Set $X_n(t)=X(t \wedge \tau_n)$ for $t \in [0,T]$. Then following the same arguments as before provides
\begin{align} \label{eq.17}
\mathbb{E}[X^{2}_n(t)]& \leq 3\mathbb{E}[x_0^2]+ 3\mu^2_+\mathbb{E}\left[\left(\int_0^{t \wedge \tau_n} X_n^{p}(s)ds\right)^2\right] +3\sigma^2_+\mathbb{E}\left[\left(\int_0^{t \wedge \tau_n} X_n^{q}(s)dW(s)\right)^2\right]\nonumber \\
& \leq 3\mathbb{E}[x_0^2] + 6 K^2(T+1)(\mu^2_+ +\sigma^2_+)\mathbb{E}\left[\int_0^{t \wedge \tau_n} X^{2}_n(s)ds \right] \nonumber \\
& \leq c_1 + c_2\int_{0}^{t} \mathbb{E}[X^{2}_n(s)]ds
\end{align}
where $c_1=3\mathbb{E}[x_0^2]$ and $c_2=3[2(T+1)K^2(\mu^2_+ + \sigma^2_+)]$ are independent of $n$. If we define $\varphi_n(t):=\mathbb{E}[X^{2}_n(t)]$, and apply the Gronwall lemma to \eqref{eq.17} we obtain
\begin{align} \label{eq.18}
\varphi_n(T) \leq c_1e^{c_2T}= c_3  \Longrightarrow \sup_{n}\varphi_n(T)<\infty \,\, \text{as } n \to \infty.
\end{align}
Since $X_n(t) \to X(t)$  $\mathbb{P}-$a.s. as  $n \to \infty$, by Fatou's lemma it reads
\begin{align} \label{eq.19}
\mathbb{E}[X^{2}(t)]=\mathbb{E}[\liminf X^{2}_n(t)]\leq \liminf\mathbb{E}[X^{2}_n(t)]\leq \sup_{n}\mathbb{E}[X^{2}_n(t)],
\end{align}
which implies that $\mathbb{E}[X^{2}(t)]$ is bounded and belongs to Lebesgue space $L([0,T])$. Therefore, the solution  $X(t)$ of \eqref{eq.1a} remains in the class $\mathcal{M_{T}}$ for any $T$.
\end{proof}

\section{The properties of the solution}\label{Sec4}

In this section, by assuming that $X(t)$, with the initial value $X(t_0)=x_0$, $0\leq t_0\leq T$, where $x_0$ is a $\mathcal{F}_{t_0}$-measurable random variable, is the unique solution for the equation \eqref{eq.1a}, we establish some important properties of the solution.

\subsection{The higher-order moments of the solution}\label{Sec4.1}
\begin{theorem}\label{Thrm:4.1}
Let $m\geq 2$ be a real number and $\mathbb{E}\left[x_0^{m}\right]<\infty$. Then
\begin{align} \label{eq.20}
\mathbb{E}\left[X^{m}(t)\right]& \leq \left(3^{m-1}\mathbb{E}[x_0^2] +(t-t_0)(A+B)\right)\,e^{t(A+B)},
\end{align}
where $A=6^{m-1}(\mu^{+})^{m}(t-t_0)^{m-1} K^{m}$ and $B=6^{m-1}(\sigma^{+})^{m}\left(\frac{m(m-1)}{2}\right)^{\frac{m}{2}}\,(t-t_0)^{\frac{m-2}{2}}\,K^{m}$.
\end{theorem}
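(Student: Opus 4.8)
The plan is to start from the mild (integral) form of the solution,
\begin{equation*}
X(t)=x_0+\int_{t_0}^{t}\mu(s)X^{p}(s)\,ds+\int_{t_0}^{t}\sigma(s)X^{q}(s)\,dW(s),
\end{equation*}
and to bound $\mathbb{E}[X^m(t)]$ (which coincides with $\mathbb{E}[|X(t)|^m]$ since $X>0$ a.s.\ by Lemma \ref{Lem:2.2}) using the three-term convexity inequality $(a+b+c)^m\le 3^{m-1}(a^m+b^m+c^m)$. This splits the estimate into an initial-data contribution $3^{m-1}\mathbb{E}[x_0^m]$ plus a drift integral and a stochastic integral, which I then treat separately.

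First I would handle the drift term by Hölder's inequality over $[t_0,s]$, which produces the factor $(s-t_0)^{m-1}$ and reduces it to $\mathbb{E}\int_{t_0}^{s}(X^p(u))^m\,du$; the diffusion term I would control by the martingale moment bound of Lemma \ref{Lem:2.3}, which supplies exactly the constant $\left(\tfrac{m(m-1)}{2}\right)^{m/2}$ together with the factor $(s-t_0)^{(m-2)/2}$ and reduces it to $\mathbb{E}\int_{t_0}^{s}(X^q(u))^m\,du$. In both integrals the nonlinear state-dependent power is tamed by the linear-growth estimate $x^{h(x)}\le K(1+x)$ established inside Lemma \ref{Lem:2.1}: raising it to the $m$-th power and invoking $(1+x)^m\le 2^{m-1}(1+x^m)$ turns each integrand into $2^{m-1}K^m(1+X^m(u))$. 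Collecting the prefactor $3^{m-1}\cdot 2^{m-1}=6^{m-1}$ is precisely what produces the stated constants $A$ and $B$, leaving a Gronwall-type inequality
\begin{equation*}
\varphi(s)\le 3^{m-1}\mathbb{E}[x_0^m]+(A+B)(s-t_0)+(A+B)\int_{t_0}^{s}\varphi(u)\,du,\qquad \varphi(s):=\mathbb{E}[X^m(s)].
\end{equation*}

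The rigorous point I would be most careful about is legitimacy: Lemma \ref{Lem:2.3} requires the $m$-th moment of the integrand to be finite a priori, which is circular with what we are proving. I would therefore localize exactly as in Step 4 of Theorem \ref{Thrm:3.1}, using the stopping times $\tau_n=T\wedge\inf\{t:|X(t)|\ge n\}$ and the bounded stopped process $X_n(t)=X(t\wedge\tau_n)$, for which every moment is trivially finite; I would derive the displayed inequality for $\varphi_n(s):=\mathbb{E}[X_n^m(s)]$ with constants independent of $n$, apply the integral Gronwall lemma to obtain $\varphi_n(t)\le\big(3^{m-1}\mathbb{E}[x_0^m]+(A+B)(t-t_0)\big)e^{(A+B)(t-t_0)}$, and then let $n\to\infty$ invoking Fatou's lemma since $X_n(t)\to X(t)$ a.s. A minor technical nuisance, rather than a genuine obstacle, is that $A$ and $B$ carry the time-dependent factors $(t-t_0)^{m-1}$ and $(t-t_0)^{(m-2)/2}$; since these are nondecreasing in the endpoint for $m\ge 2$, I would fix the terminal time $t$ and bound $(s-t_0)\le(t-t_0)$ throughout $[t_0,t]$, so that the forcing term in Gronwall is nondecreasing, which is exactly what the closed-form version requires. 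Finally, bounding $e^{(A+B)(t-t_0)}\le e^{(A+B)t}$ (valid since $t_0\ge 0$) recovers the stated estimate.
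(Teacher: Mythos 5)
Your proposal follows essentially the same route as the paper's own proof: the same $3^{m-1}$ three-term splitting of the integral equation, H\"older's inequality for the drift term, Lemma \ref{Lem:2.3} for the stochastic integral, the linear-growth bound $x^{h(x)}\le K(1+x)$ raised to the $m$-th power to produce the $6^{m-1}$ constants in $A$ and $B$, localization by stopping times, Gronwall's inequality, and a passage to the limit in $n$. If anything, you are more careful than the paper on three points: you justify the limit $n\to\infty$ via Fatou's lemma (the paper simply lets $n\to\infty$), you explicitly handle the fact that $A$ and $B$ carry time-dependent factors so the Gronwall forcing term must be made nondecreasing by freezing the terminal time, and your initial-data term $3^{m-1}\mathbb{E}\left[x_0^m\right]$ corrects what is evidently a typo ($\mathbb{E}\left[x_0^2\right]$) propagated through the paper's statement and proof.
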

\begin{proof}
Let's first define the stopping time $\hat{\tau}_n$ by
\begin{equation} \label{eq.20d}
\hat{\tau}_n=T \wedge \inf\{t\geq0;\, X_{n}(t) \notin (0,n),\,\, n \in \mathbb{N}\},
\end{equation}
from which we have $\hat{\tau}_n \uparrow T$ $\mathbb{P}-$a.s. Set $X_n(t)=X(t \wedge \hat{\tau}_n)$ for $t \in [0,T]$. Then proceeding as before but this time using Lemma \ref{Lem:2.3} and the linear growth \eqref{eq.1e} provides
\begin{align} \label{eq.20e}
\mathbb{E}\left[X^{m}_n(t)\right]& \leq 3^{m-1}\mathbb{E}[x_0^2]+ 3^{m-1}(\mu^{+})^{m}\mathbb{E}\left[\left(\int_{t_0}^{t \wedge \hat{\tau}_n} X_n^{p}(s)ds\right)^m\right]\nonumber \\
& +3^{m-1}(\sigma^{+})^{m}\mathbb{E}\left[\left(\int_{t_0}^{t \wedge \hat{\tau}_n} X_n^{q}(s)dW(s)\right)^m\right]\nonumber \\
&\leq 3^{m-1}\mathbb{E}[x_0^2]+ 3^{m-1}(\mu^{+})^{m}(t-t_0)^{m-1}\mathbb{E}\left[\int_{t_0}^{t \wedge \hat{\tau}_n} X^{mp}_n(s)ds \right]\nonumber \\
&+3^{m-1}(\sigma^{+})^{m}\left(\frac{m(m-1)}{2}\right)^{\frac{m}{2}}\,(t-t_0)^{\frac{m-2}{2}}\,\mathbb{E}\left[\int_{t_0}^{t \wedge \hat{\tau}_n} X^{qm}_n(s)ds\right]  \nonumber \\
&\leq 3^{m-1}\mathbb{E}[x_0^2] +(t-t_0)(A+B)+ (A+B)\int_{t_0}^{t \wedge \hat{\tau}_n} \mathbb{E}\left[X^{m}_n(s)\right]ds \nonumber \\
&\leq 3^{m-1}\mathbb{E}[x_0^2] +(t-t_0)(A+B)+ (A+B)\int_{t_0}^{t} \mathbb{E}\left[X^{m}(s \wedge \hat{\tau}_n)\right]ds.
\end{align}
Applying the Gronwall inequality gives
\begin{align} \label{eq.20f}
\mathbb{E}\left[X^{m}(t \wedge \hat{\tau}_n)\right]& \leq \left(3^{m-1}\mathbb{E}[x_0^2] +(t-t_0)(A+B)\right)\,e^{t(A+B)}.
\end{align}
Finally, letting $n \to \infty$, and considering that $\hat{\tau}_n \to T$ as $n \to \infty$ $\mathbb{P}-$a.s., gives
\begin{align} \label{eq.20g}
\mathbb{E}\left[X^{m}(t)\right]& \leq \left(3^{m-1}\mathbb{E}[x_0^2] +(t-t_0)(A+B)\right)\,e^{t(A+B)}.
\end{align}
\end{proof}

\subsection{The asymptotic estimate of the solution}\label{Sec4.2}
\begin{theorem}
Assume that the linear growth condition \eqref{eq.1e} holds. Then the solution $X(t)$ of \eqref{eq.1a} $\mathbb{P}$-a.s. has the following property
\begin{align} \label{eq.20h}
\limsup_{t \to \infty}\frac{\log(X(t))}{t}\leq \hat{K},
\end{align}
where $\hat{K}=4\max\{K^2(\sigma^{+})^2, K\mu^+ \}$.
\end{theorem}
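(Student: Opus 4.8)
The plan is to linearize the multiplicative dynamics by applying It\^o's formula to $\log X(t)$ --- legitimate because Lemma~\ref{Lem:2.2} guarantees $X(t)>0$ $\mathbb{P}$-a.s. --- and then to control the resulting stochastic integral with the exponential martingale inequality of Lemma~\ref{Lem:2.4} combined with a Borel--Cantelli argument along the integers. Writing the exponents $p=p(X(s))$, $q=q(X(s))$ for brevity, It\^o's formula gives
$$\log X(t)=\log x_0+\int_0^t\Bigl(\mu(s)X^{p-1}(s)-\tfrac12\sigma^2(s)X^{2q-2}(s)\Bigr)ds+M(t),$$
where $M(t)=\int_0^t\sigma(s)X^{q-1}(s)\,dW(s)$ is a continuous local martingale with $\langle M\rangle(t)=\int_0^t\sigma^2(s)X^{2q-2}(s)\,ds$.

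First I would extract uniform pointwise bounds on the two integrands from the linear-growth estimate~\eqref{eq.1e}. Since $p(\cdot),q(\cdot)\ge1$ by $(\mathbf{h_1})$, for $0<x\le1$ both $x^{p(x)-1}$ and $x^{2q(x)-2}$ are $\le1$; for $x>1$ the inequality $x^{h(x)}\le K(1+x)$ gives $x^{p(x)-1}=x^{p(x)}/x\le K(1+x)/x\le2K$ and $x^{2q(x)-2}=(x^{q(x)})^2/x^2\le K^2(1+x)^2/x^2\le4K^2$. Hence, using $(\mathbf{D})$ and $K\ge1$, one has $\mu(s)X^{p-1}(s)\le2K\mu^+$ and $\sigma^2(s)X^{2q-2}(s)\le4K^2(\sigma^+)^2$ for every $s$. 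Consequently the drift integral is at most $2K\mu^+ t$, the It\^o correction $-\tfrac12\sigma^2X^{2q-2}\le0$ may be discarded in an upper bound, and $\langle M\rangle(t)\le4K^2(\sigma^+)^2t$; in particular the integrability hypothesis of Lemma~\ref{Lem:2.4} is automatic.

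Next I would apply Lemma~\ref{Lem:2.4} on each interval $[0,n]$, $n\in\mathbb{N}$, with $g=\sigma X^{q-1}$, $\alpha=1$, and $\beta_n=2\log n$, so that the exceptional probabilities $e^{-\alpha\beta_n}=n^{-2}$ are summable. By Borel--Cantelli there is an a.s.\ finite random $n_0$ such that $\sup_{t\le n}\bigl(M(t)-\tfrac12\langle M\rangle(t)\bigr)\le2\log n$ for all $n\ge n_0$. Combining this with the bounds above, for $n-1\le t\le n$ and $n\ge n_0$,
$$\log X(t)\le\log x_0+2K\mu^+ t+\tfrac12\langle M\rangle(t)+2\log n\le\log x_0+2K\mu^+ t+2K^2(\sigma^+)^2t+2\log n.$$
Dividing by $t$ and letting $t\to\infty$ (so $n\to\infty$, $\log n/t\to0$, $\log x_0/t\to0$) yields
$$\limsup_{t\to\infty}\frac1t\log X(t)\le2K\mu^++2K^2(\sigma^+)^2\le4\max\{K^2(\sigma^+)^2,K\mu^+\}=\hat K,$$
the final step being $a+b\le2\max\{a,b\}$.

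The main obstacle is the second step: the state-dependent exponents $p(X)-1$ and $2q(X)-2$ are not bounded functions of $x$ on their own, so the uniform constants $2K$ and $4K^2$ must be wrung out of the linear-growth inequality through the $x\le1$ versus $x>1$ case split, rather than from $(\mathbf{h_1})$ directly. A secondary but essential point is the calibration in Lemma~\ref{Lem:2.4}: taking $\alpha=1$ is precisely what makes $\tfrac12\langle M\rangle$ reappear with a positive coefficient instead of cancelling, thereby producing the $2K^2(\sigma^+)^2$ term and hence the $K^2(\sigma^+)^2$ contribution inside $\hat K$, while $\beta_n=2\log n$ must grow fast enough for summability yet remain $o(t)$ so that it vanishes after division by $t$.
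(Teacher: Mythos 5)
Your proof is correct, and it rests on the same machinery as the paper's: a logarithmic transform via It\^o's formula, the exponential martingale inequality of Lemma~\ref{Lem:2.4} with $\alpha=1$ and $\beta_n=2\log n$, Borel--Cantelli along the integers, and the interpolation $n-1\le t\le n$. The decomposition, however, is genuinely different. The paper applies It\^o's formula to $\log(1+X^2(t))$, bounds the entire drift with the single quadratic-growth estimate \eqref{eq.20m}, namely $x\mu(t)x^{p(x)}+\tfrac12\sigma^2(t)x^{2q(x)}\le\hat{K}(1+x^2)$, and retains the negative quadratic-variation term so that it cancels exactly against the compensator coming from Lemma~\ref{Lem:2.4}; the conclusion then follows after dividing $\log(1+X^2(t))\le\log(1+x_0^2)+2\hat{K}(t-t_0)+2\log n$ by $2t$. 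You instead apply It\^o's formula to $\log X(t)$ itself. This is legitimate because Lemma~\ref{Lem:2.2} gives $X(t)>0$ a.s., but note that $\log$ is $C^2$ only on $(0,\infty)$, so a fully rigorous version of your first display needs a localization (stop $X$ on exit from $(1/m,m)$ and let $m\to\infty$ using positivity and non-explosion); the paper's choice of $\log(1+x^2)$, being globally $C^2$, sidesteps this entirely. What your route buys is the pair of pointwise bounds $X^{p-1}\le 2K$ and $X^{2q-2}\le 4K^2$ obtained from \eqref{eq.1e} via the case split $x\le1$ versus $x>1$ (this is sound, and your assertion $K\ge1$ is indeed forced, since $x\le x^{h(x)}\le K(1+x)$ for $x\ge1$ implies $K\ge1$); these let you discard the negative It\^o correction and bound the compensator $\tfrac12\langle M\rangle(t)$ directly, yielding the constant $2K\mu^++2K^2(\sigma^+)^2$, which is at least as sharp as $\hat{K}=4\max\{K^2(\sigma^+)^2,K\mu^+\}$. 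Had you kept the negative correction and let it cancel the compensator --- the paper's cancellation transplanted to your setting --- you would have obtained the still sharper bound $2K\mu^+$; either way the stated estimate \eqref{eq.20h} follows a fortiori.
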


\begin{proof}
Applying It\^{o}'s formula gives
\begin{align} \label{eq.20k}
d(\log(1+X^2(t)))&=\frac{1}{1+X^2(t)}\left[2 X(t) \mu(t) X^{p}(t) + \sigma^2(t) X^{2q}(t) \right]dt \nonumber \\
&-\frac{2 X^2(t) \sigma^2(t) X^{2q}(t)}{(1+X^2(t))^2}dt+\frac{2X(t) \sigma(t) X^{q}(t)}{1+X^2(t)}dW(t).
\end{align}
Note that from the linear growth \eqref{eq.1e}, one can obtain
\begin{align} \label{eq.20m}
x\mu(t) x^{p(x)} + \frac{1}{2}\sigma^2(t) x^{2q(x)}\leq \hat{K}(1+x^2).
\end{align}
Then
\begin{align} \label{eq.20n}
\log(1+X^2(t))&=\log(1+x^2_0)+2\hat{K}(t-t_0) \nonumber \\
&-2\int_{t_0}^{t}\frac{(\sigma(s) X(s)  X^{q}(s))^2}{(1+X^2(s))^2}ds+2\int_{t_0}^{t}\frac{ \sigma(s) X(s) X^{q}(s)}{1+X^2(s)}dW(s).
\end{align}
Then, for any integer $n\geq t_0$, using Lemma \ref{Lem:2.4} gives
\begin{align} \label{eq.20p}
\mathbb{P}\left\{\sup_{t_0\leq t \leq n} \left[2\int_{t_0}^{t}\frac{ \sigma(s) X(s) X^{q}(s)}{1+X^2(s)}dW(s)-2\int_{t_0}^{t}\frac{(\sigma(s) X(s)  X^{q}(s))^2}{(1+X^2(s))^2}ds\right]>2\log(n) \right\}\leq \frac{1}{n^2}.
\end{align}
Define the events
\begin{align} \label{eq.20r}
E_{n}=\left\{\sup_{t_0\leq t \leq n} \left[\int_{t_0}^{t}\frac{ \sigma(s) X(s) X^{q}(s)}{1+X^2(s)}dW(s)-\int_{t_0}^{t}\frac{(\sigma(s) X(s)  X^{q}(s))^2}{(1+X^2(s))^2}ds\right]>\log(n) \right\}.
\end{align}
Then
\begin{align} \label{eq.20s}
\sum_{n=1}^{\infty}\mathbb{P}(E_{n})\leq \sum_{n=1}^{\infty}\frac{1}{n^2}<\infty,
\end{align}
which, by the Borel-Cantelli lemma, implies
\begin{align} \label{eq.20st}
\mathbb{P}(\limsup_{n \to \infty}E_{n})=\mathbb{P}(E_{n}\, \text{i.o})=0.
\end{align}
Therefore, for almost all $\omega \in \Omega$ there exists a random integer $n_0=n_0(\omega)\geq t_0+1$ such that
\begin{align} \label{eq.20t}
\sup_{t_0\leq t \leq n} \left[\int_{t_0}^{t}\frac{ \sigma(s) X(s) X^{q}(s)}{1+X^2(s)}dW(s)-\int_{t_0}^{t}\frac{(\sigma(s) X(s)  X^{q}(s))^2}{(1+X^2(s))^2}ds\right]\leq\log(n),\quad \text{if }\,n\geq n_0.
\end{align}
Then for all $t_0\leq t\leq n$, $n\geq n_0$ $\mathbb{P}-a.s.$ we have
\begin{align} \label{eq.20u}
\int_{t_0}^{t}\frac{ \sigma(s) X(s) X^{q}(s)}{1+X^2(s)}dW(s)\leq \int_{t_0}^{t}\frac{(\sigma(s) X(s)  X^{q}(s))^2}{(1+X^2(s))^2}ds+\log(n).
\end{align}
Thus using \eqref{eq.20u} in \eqref{eq.20n} gives
\begin{align} \label{eq.20v}
\log(1+X^2(t))&=\log(1+x^2_0)+2\hat{K}(t-t_0)+2\log(n).
\end{align}
Hence, for almost all $\omega \in \Omega$, when $n\geq n_0$ and $n-1\leq t\leq n$, we can write
\begin{align} \label{eq.20y}
\frac{1}{t}\log(1+X^2(t))&=\frac{1}{n-1}\left(\log(1+x^2_0)+2\hat{K}(n-t_0)+2\log(n)\right),
\end{align}
from which one can, $\mathbb{P}$-a.s., obtain
\begin{align} \label{eq.20z}
\limsup_{t \to \infty}\frac{\log(X(t))}{t}&\leq \limsup_{t \to \infty}\frac{1}{2t}\log(1+X^2(t))\nonumber \\
&\leq \limsup_{n \to \infty}\frac{1}{2(n-1)}\left(\log(1+x^2_0)+2\hat{K}(n-t_0)+2\log(n)\right)=\hat{K}.
\end{align}
This completes the proof.
\end{proof}

\subsection{The stability of the solution}\label{Sec4.3}

\begin{definition} \label{Def:4.1}
A solution $X_{\xi}(t)$ of \eqref{eq.1a} with initial value $X(t_0)=\xi$, $0\leq t_0\leq T$, is said to be stable in the $m$th power if for all $\varepsilon > 0$ there exists $\delta > 0$ such that when \newline $\mathbb{E}\left[|\xi - \eta|^m\right]<\delta$, then
\begin{align} \label{eq.21}
\mathbb{E}\left[\sup_{0\leq s \leq t}|X_{\xi}(s)-X_{\eta}(s)|^m\right] & \leq \varepsilon,
\end{align}
where $X_{\eta}(t)$ is another solution of \eqref{eq.1a} with initial value $X(t_0)=\eta$, and $\xi,\eta$ are $\mathcal{F}_{t_0}$-measurable random variables.
\end{definition}

\begin{theorem}
Let $X_{\xi}(t)$ and $X_{\eta}(t)$ be two solutions of \eqref{eq.1a} starting from positions $\xi$ and $\eta$, respectively, where $\xi$ and $\eta$ are $\mathcal{F}_{t_0}$-measurable random variables. Then the solution of \eqref{eq.1a} is stable in the $m$th power.
\end{theorem}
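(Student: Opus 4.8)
The plan is to follow the classical Gronwall-type route for stability, controlling the $m$th moment of the running supremum of the difference process $Z(t):=X_{\xi}(t)-X_{\eta}(t)$. Writing both solutions in integral form and subtracting, I would begin from
\begin{equation*}
Z(t)=(\xi-\eta)+\int_{t_0}^{t}\mu(s)\bigl(X_\xi^{p}(s)-X_\eta^{p}(s)\bigr)ds+\int_{t_0}^{t}\sigma(s)\bigl(X_\xi^{q}(s)-X_\eta^{q}(s)\bigr)dW(s),
\end{equation*}
take the supremum over $s\in[t_0,t]$, raise to the $m$th power, and split into three pieces via $|a+b+c|^m\le 3^{m-1}(|a|^m+|b|^m+|c|^m)$. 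The structural input that makes the nonlinearity tractable is the global Lipschitz bound \eqref{eq.7}, which gives $|\mu(s)(X_\xi^{p}-X_\eta^{p})|\le L|Z(s)|$ and $|\sigma(s)(X_\xi^{q}-X_\eta^{q})|\le L|Z(s)|$, converting the variable-exponent coefficient differences into multiples of $|Z|$.

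For the drift term I would apply Jensen/H\"older in time to obtain $\sup_{t_0\le s\le t}\bigl|\int_{t_0}^{s}\mu(X_\xi^{p}-X_\eta^{p})dr\bigr|^m\le L^m(t-t_0)^{m-1}\int_{t_0}^{t}|Z(r)|^m dr$. The diffusion term is the delicate one, because Lemma \ref{Lem:2.3} controls only the endpoint moment of a stochastic integral and not its running supremum. To bridge this I would first apply Doob's $L^m$ maximal inequality to the martingale $s\mapsto\int_{t_0}^{s}\sigma(X_\xi^{q}-X_\eta^{q})dW$, giving $\mathbb{E}[\sup_{t_0\le s\le t}|\cdot|^m]\le(m/(m-1))^m\,\mathbb{E}[|\int_{t_0}^{t}\cdot\,dW|^m]$, and then feed the endpoint moment into Lemma \ref{Lem:2.3} followed by a H\"older step, producing a bound of the form $C_m L^m(t-t_0)^{(m-2)/2}\int_{t_0}^{t}\mathbb{E}[\sup_{t_0\le u\le r}|Z(u)|^m]dr$.

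Setting $\psi(t):=\mathbb{E}[\sup_{t_0\le s\le t}|Z(s)|^m]$ and combining the three estimates collapses everything into a single integral inequality $\psi(t)\le 3^{m-1}\mathbb{E}[|\xi-\eta|^m]+C\int_{t_0}^{t}\psi(r)dr$, with $C$ depending only on $m,L,T$. Gronwall's lemma then yields $\psi(t)\le 3^{m-1}e^{CT}\mathbb{E}[|\xi-\eta|^m]$, so for prescribed $\varepsilon>0$ the choice $\delta=\varepsilon/(3^{m-1}e^{CT})$ delivers \eqref{eq.21}. I expect the only genuine obstacle to be the supremum inside the diffusion estimate: the whole argument hinges on upgrading the raw endpoint bound of Lemma \ref{Lem:2.3} to a maximal bound, which Doob's inequality supplies cleanly. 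A secondary technical point is justifying finiteness of all quantities before invoking Gronwall; this I would handle exactly as in the earlier proofs, via the localizing stopping times $\hat{\tau}_n$ together with the finite $m$th-moment estimate of Theorem \ref{Thrm:4.1}, passing to the limit $n\to\infty$ by Fatou at the end.
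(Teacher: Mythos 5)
Your proposal is correct and follows essentially the same route as the paper's proof: integral form of the difference, the $3^{m-1}$ splitting, the Lipschitz bound \eqref{eq.7}, a maximal inequality for the stochastic integral, and Gronwall, ending with the same choice $\delta=\varepsilon/(3^{m-1}e^{CT})$. The only minor differences are that you obtain the maximal bound via Doob's $L^m$ inequality combined with Lemma \ref{Lem:2.3}, where the paper invokes the Burkholder--Davis--Gundy inequality plus H\"older (the two give the same form of estimate up to the constant), and that you work directly with the random initial data $\xi,\eta$ and add a localization step, which is, if anything, more faithful to Definition \ref{Def:4.1} and more careful about finiteness than the paper's reduction to sequences $x_n\to x$.
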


\begin{proof}
We fix $t \in [0,T]$, and consider the sequence $x_n \to x$ $\mathbb{P}-$a.s. as $n \to \infty $ with $x \in (0,\infty)$, and $(x_n) \subset (0,\infty)$. Let $X_{x_n}(t)$ and $X_{x}(t)$ denote the solutions of \eqref{eq.1a} starting from positions $x_n$ and $x$, respectively. Then for any integer $m\geq 2$, it reads
\begin{align} \label{eq.22}
\sup_{0\leq s \leq t}|X_{x_n}(s)-X_{x}(s)|^{m} & \leq 3^{m-1} |x_n-x|^m \nonumber \\
& + 3^{m-1}(\mu^{+})^m\sup_{0\leq s \leq t}\left|\int_0^s \left(X_{x_n}(r)^{p(X_{x_n}(r))}-X_{x}(r)^{p(X_{x}(r))}\right) dr\right|^{m} \nonumber \\
& + 3^{m-1} (\sigma^{+})^m \sup_{0\leq s \leq t}\left|\int_0^s \left(X_{x_n}(r)^{q(X_{x_n}(r))}-X_{x}(r)^{q(X_{x}(r))}\right) dW(r)\right|^{m}.
\end{align}
Applying the Burkholder-Davis-Gundy inequality \cite{bg1970, davis1970} and the H\"{o}lder inequality  together yields
\begin{align} \label{eq.23}
\mathbb{E}\left[\sup_{0\leq s \leq t}|X_{x_n}(s)-X_{x}(s)|^m\right] & \leq 3^{m-1} \mathbb{E}\left[|x_n-x|^m\right] \nonumber \\
& + 3^{m-1}(\mu^{+})^m L^m t^{m-1} \int_0^t \mathbb{E}\left[\sup_{0\leq s \leq r}\left|X_{x_n}(s)-X_{x}(s)\right|^m\right] dr \nonumber \\
& + 3^{m-1} C_m\sigma^{+})^m L^m t^{(m-2)/2} \int_0^t \mathbb{E}\left[\sup_{0\leq s \leq r}\left|X_{x_n}(s)-X_{x}(s)\right|^m\right] dr \nonumber \\
& \leq 3^{m-1} \mathbb{E}\left[|x_n-x|^m\right] + \bar{L}\int_0^t \mathbb{E}\left[\sup_{0\leq s \leq r}\left|X_{x_n}(s)-X_{x}(s)\right|^m\right] dr.
\end{align}
where $\bar{L}=3^{m-1}L^m((\mu^{+})^m t^{m-1}+ C_m(\sigma^{+})^m t^{(m-2)/2})$. Lastly, applying the Gronwall inequality provides
\begin{align} \label{eq.24}
\mathbb{E}\left[\sup_{0\leq s \leq t}|X_{x_n}(s)-X_{x}(s)|^m\right] & \leq 3^{m-1}\, e^{T\bar{L}}\, \mathbb{E}\left[|x_n-x|^m\right].
\end{align}
Considering that $x_n \to x$ $\mathbb{P}-$a.s., there exists a number $\delta^{1/m}>0$ and a positive integer $n_0$ such that whenever $n\geq n_0$, it holds $|x_n-x|^m< \delta$, and hence, $\mathbb{E}\left[|x_n-x|^m\right]<\delta$. Thus, if we let $\delta=\frac{\varepsilon}{ 3^{m-1}\, e^{T\bar{L}}}$ in \eqref{eq.24}, it follows
\begin{align} \label{eq.24a}
\mathbb{E}\left[\sup_{0\leq s \leq t}|X_{x_n}(s)-X_{x}(s)|^m\right] & \leq \varepsilon,
\end{align}
which, by the Dominated Convergence theorem, implies
\begin{align} \label{eq.25}
\sup_{0\leq s \leq t}|X_{x_n}(s)-X_{x}(s)|\to 0\, \text{ in }\, L^{m}([0,T])\, \text{ whenever }\, x_n \to x\,\,\, \mathbb{P}-a.s.
\end{align}
The proof is complete.
\end{proof}

\section{Application}\label{Sec5}
SDEs have numerous applications, one of the most notable being their role in representing solutions to (deterministic) differential equations through the Feynman–Kac formula. This formula provides a fundamental link between SDEs and differential equations, offering a probabilistic framework for analyzing and solving such equations.\\

Let us consider Poisson’s equation
\begin{equation}\label{eq.1ab}
\begin{cases}
\begin{array}{rll}
Lu(x)-cu(x)& =-f(x)\quad \text{in } \mathcal{O}=(a,b)\subset (0,\infty),  \\
u(x)&=0 \quad \text{on } \partial \mathcal{O},
\end{array}
\end{cases}
\end{equation}
where $f$ is a real-valued $C^1$ function; $\mu, \sigma>0$ and $c \geq 0$ real parameters; $L$ is the linear differential operator
\begin{align} \label{eq.26}
L= \frac{1}{2}\sigma^2 x^{2q(x)}\frac{d^2 }{d x^2}+\mu x^{p(x)}\frac{d}{d x}.
\end{align}
Next, we show that for the coefficients of $L$, the following conditions are satisfied:
\begin{itemize}
  \item [($\mathbf{a_1}$)] Since
\begin{equation}\label{eq.27}
\frac{1}{2}\sigma^2 x^{2q(x)}\geq \frac{1}{2}\sigma^2\,\min\{a^{2q^-}, a^{2q^+}\}:=\lambda,\,\, \lambda \in (0,\infty)
\end{equation}
$L$ is uniformly elliptic in $\mathcal{O}$.
  \item [($\mathbf{a_2}$)] $\sigma^2 x^{2q(x)}$ and $\mu x^{p(x)}$  are uniformly Lipschitz continuous on $\overline{\mathcal{O}}=[a,b]$.\\ We'll only provide a proof for $\vartheta(x):=x^{p(x)}$ since the same reasoning applies to $x^{2q(x)}$.
  Clearly $\vartheta$ is continuous on $\overline{\mathcal{O}}$. Further, using the assumptions ($\mathbf{h_1}$) and ($\mathbf{h_3}$), we obtain
\begin{align} \label{eq.28}
|\vartheta'(x)|\leq \max\{b^{p^-},b^{p^+}\}\left[\left(M_0+ C_0 a^{-(1+\alpha)}\right)\log(b)+\frac{p^+}{a} \right]
\end{align}
 which implies, by the Mean Value theorem, that $\vartheta$ is uniformly Lipschitz continuous on $\overline{\mathcal{O}}$.
\end{itemize}
Under these assumptions, it is well-known that Poisson’s equation \eqref{eq.1ab} admits a unique solution $u \in C^2(\mathcal{O})\cap C(\overline{\mathcal{O}})$.\\

\begin{theorem} \label{Thrm:5.1}
Suppose $u$ is the unique solution to Poisson’s equation \eqref{eq.1ab} in $\mathcal{O}$. Then $u$ is given by the Feynman–Kac formula
\begin{align} \label{eq.29}
u(x)=\mathbb{E}^{x}\left[\int_{0}^{\tau_{\mathcal{O}}} e^{-cs}\, f(X(s))ds\right],
\end{align}
where $\tau_{\mathcal{O}}=\inf\{t\geq0:  X(t) \notin \mathcal{O}\}$;  $\mathbb{E}^{x}[\cdot]$ means expectation conditioned on $X_0=x$; and $X(t)$ is the unique global solution of time-homogeneous nonlinear SDE with state-dependent variable exponent
\begin{equation}\label{eq.1abc}
\begin{cases}
\begin{array}{rll}
dX(t) &= \mu X^{p}(t)dt + \sigma X^{q}(t)dW(t),\,\, t\in [0,T], \\
X_{0}&=x \in \mathcal{O}.
\end{array}
\end{cases}
\end{equation}
\end{theorem}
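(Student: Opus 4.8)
The plan is to run the classical Dynkin/Feynman--Kac argument: apply It\^o's formula to the discounted process $e^{-cs}u(X(s))$ and use that the infinitesimal generator of the diffusion \eqref{eq.1abc} (which by Theorem~\ref{Thrm:3.1} with constant $\mu,\sigma$ has a unique global solution, strictly positive by Lemma~\ref{Lem:2.2}) is exactly the operator $L$ of \eqref{eq.26}. Writing out It\^o's formula, the drift-plus-second-order part of $e^{-cs}u(X(s))$ is $e^{-cs}(Lu-cu)(X(s))$, which equals $-e^{-cs}f(X(s))$ as long as $X(s)\in\mathcal{O}$ by the PDE in \eqref{eq.1ab}; the remaining term is the stochastic integral $\int_0^{\cdot} e^{-cs}u'(X(s))\,\sigma X^{q}(s)\,dW(s)$. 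Thus the entire content of the theorem reduces to integrating this identity up to the exit time, showing the stochastic integral averages to zero, and invoking the boundary condition $u=0$ on $\partial\mathcal{O}$.

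Because $u$ is only guaranteed $C^2$ on the open interval $\mathcal{O}=(a,b)$, I would first localize. Put $\mathcal{O}_k=(a+1/k,\,b-1/k)$ for large $k$, with exit times $\tau_k=\inf\{t\ge0:X(t)\notin\mathcal{O}_k\}$, so that $\tau_k\uparrow\tau_{\mathcal{O}}$. On each compact $\overline{\mathcal{O}_k}\subset\mathcal{O}$ the coefficients $\mu x^{p(x)}$ and $\sigma^2x^{2q(x)}$ are continuous (indeed Lipschitz, by $(\mathbf{a_2})$) and $u',u''$ are continuous and bounded, so It\^o's formula applies cleanly to $e^{-cs}u(X(s))$ on $[0,\tau_k]$ and the integrand of the stochastic integral is bounded on the stopped interval; hence that integral is a true martingale and has zero expectation. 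Taking expectations of the integrated identity and rearranging gives the Dynkin relation
\[
u(x)=\mathbb{E}^{x}\!\left[e^{-c\tau_k}u(X(\tau_k))\right]+\mathbb{E}^{x}\!\left[\int_0^{\tau_k}e^{-cs}f(X(s))\,ds\right].
\]
I would then let $k\to\infty$. Since $f$ is continuous on the compact $[a,b]$ it is bounded there, so $|e^{-cs}f(X(s))|\le\|f\|_{\infty}$ and the running-cost term converges, by dominated (or monotone) convergence, to $\mathbb{E}^{x}[\int_0^{\tau_{\mathcal{O}}}e^{-cs}f(X(s))\,ds]$. For the boundary term, continuity of the paths gives $X(\tau_k)\to X(\tau_{\mathcal{O}})\in\partial\mathcal{O}$, and since $u$ is continuous on $\overline{\mathcal{O}}$ with $u=0$ on $\partial\mathcal{O}$ we get $e^{-c\tau_k}u(X(\tau_k))\to 0$; as $u$ is bounded on $\overline{\mathcal{O}}$, dominated convergence sends its expectation to $0$. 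Combining the two limits yields precisely \eqref{eq.29}.

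The step I expect to be the main obstacle is justifying that $\tau_{\mathcal{O}}<\infty$ $\mathbb{P}$-a.s.\ (in fact $\mathbb{E}^{x}[\tau_{\mathcal{O}}]<\infty$), which is what makes the limiting boundary term meaningful and the running-cost integral finite. This is exactly where uniform ellipticity $(\mathbf{a_1})$ enters: on the bounded interval one has $\tfrac12\sigma^2x^{2q(x)}\ge\lambda>0$ while the drift $\mu x^{p(x)}$ is bounded and positive, so the concave barrier $\psi(x)=e^{\gamma b}-e^{\gamma x}\ge0$ satisfies
\[
L\psi(x)=-e^{\gamma x}\!\left(\tfrac12\sigma^2x^{2q(x)}\gamma^2+\mu x^{p(x)}\gamma\right)\le-\lambda\gamma^2 e^{\gamma a}\le-1
\]
on $[a,b]$ for $\gamma$ large enough. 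Applying It\^o's formula to $\psi(X(t\wedge\tau_k))$ and taking expectations then gives $\mathbb{E}^{x}[\tau_k]\le\psi(x)\le\sup_{\overline{\mathcal{O}}}\psi<\infty$ uniformly in $k$, whence $\mathbb{E}^{x}[\tau_{\mathcal{O}}]<\infty$ and $\tau_{\mathcal{O}}<\infty$ a.s.\ by monotone convergence. Verifying this barrier estimate and its uniformity in $k$ is the only genuinely delicate point; the rest is the routine Feynman--Kac computation.
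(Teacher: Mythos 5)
Your proposal is correct, and its skeleton is the same as the paper's: It\^o's formula for the discounted process, localization by stopping times increasing to $\tau_{\mathcal{O}}$ (your $\tau_k$ for $\mathcal{O}_k=(a+1/k,\,b-1/k)$ is the same device as the paper's $S_n=\inf\{t\geq 0:\mathrm{dist}(X(t),\partial\mathcal{O})<1/n\}$), the martingale property of the stopped stochastic integral, substitution of $Lu-cu=-f$, and passage to the limit using path continuity, the boundary condition $u=0$ on $\partial\mathcal{O}$, and dominated convergence. You differ on the two technical points, and in both cases your treatment is tighter. For the martingale property you simply note that on the compact set $\overline{\mathcal{O}_k}$ the integrand $e^{-cs}u'(X(s))\,\sigma X^{q}(s)$ is bounded, which settles the matter at once; the paper instead invokes Lemma~\ref{Lem:2.1}, the linear growth bound, and the moment estimate \eqref{eq.20} to control $\mathbb{E}\left[\sup X^{2q}\right]$ --- machinery that is unnecessary once the process has been stopped inside a compact subset of $\mathcal{O}$. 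More substantially, the paper merely asserts that $\mathbb{P}(\tau_{\mathcal{O}}=\infty)=0$ ``by the uniform ellipticity of $L$ and the boundedness of $\mathcal{O}$,'' with no argument, whereas you prove it via the barrier $\psi(x)=e^{\gamma b}-e^{\gamma x}$ satisfying $L\psi\leq -1$ on $[a,b]$ for $\gamma$ large, which yields the stronger conclusion $\mathbb{E}^{x}[\tau_{\mathcal{O}}]\leq\psi(x)<\infty$ uniformly over the localization. This estimate is not cosmetic: since the paper allows $c=0$, the natural dominating function for the running-cost term is $\|f\|_{\infty}\,\tau_{\mathcal{O}}$, whose integrability is exactly $\mathbb{E}^{x}[\tau_{\mathcal{O}}]<\infty$; without it the final dominated-convergence step (and the meaning of the boundary term $X(\tau_{\mathcal{O}})\in\partial\mathcal{O}$) is unjustified. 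So your proposal not only reproduces the paper's argument but fills a genuine gap in it.
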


\begin{proof}
Since $u \in C^{2}$, we can apply It\^{o}'s formula and obtain
\begin{align} \label{eq.30}
d(u(X(t))&=\left(\mu X^{p}(t)\frac{du(X(t))}{d x}+\frac{1}{2}\sigma^2 X^{2q}(t)\frac{d^2 u(X(t))}{d x^2} \right)dt\nonumber\\
&+\sigma X(t)^{q}\frac{d u(X(t))}{d x} dW(t),
\end{align}
and integrating over $[0,t]$
\begin{align} \label{eq.31}
u(X(t))&=u(X_{0})+\int_{0}^{t}\left(\mu X^{p}(s)\frac{du(X(s))}{d x}+\frac{1}{2}\sigma^2 X^{2q}(s)\frac{d^2 u(X(s))}{d x^2} \right)ds\nonumber\\
&+\int_{0}^{t}\sigma X^{q}(s)\frac{d u(X(s))}{d x} dW(s)\nonumber\\
&=u(X_{0})+\int_{0}^{t}Lu(X(s))ds+\int_{0}^{t}\sigma X^{q}(s)\frac{d u(X(s))}{d x} dW(s)
\end{align}
By the uniform ellipticity of $L$ and the boundedness of $\mathcal{O}$, $\mathbb{P}(\tau_{\mathcal{O}}=\infty)=0$; that is,  $\tau_{\mathcal{O}}<\infty$ $\mathbb{P}$-a.s. Let us define the approximated stopping time
\begin{align} \label{eq.31a}
S_n=\inf\left\{t\geq0: \text{dist}( X(t),\,\partial\mathcal{O})<\frac{1}{n}\right\}.
\end{align}
Note that with this definition, we have $S_n\leq S_{n+1}$ for all $n$, and thus
\begin{align} \label{eq.32}
S_n \leq \tau_{\mathcal{O}}\,\,\,  \text{ and }\,\,\, \lim_{n \to \infty}\,S_n=\tau_{\mathcal{O}}\,\,\, \mathbb{P}-a.s.
\end{align}
Then
\begin{align} \label{eq.32}
u(X_n)-u(X_{0})&=\int_{0}^{t\wedge S_n}Lu(X(s))ds+M_t.
\end{align}
where $X_n=X(t\wedge S_n)$ and
\begin{equation} \label{eq.33}
M_t=\int_{0}^{t\wedge S_n}\sigma X^{q}(s)\frac{d u(X(s))}{d x} dW(s).
\end{equation}
On the other hand, by Lemma \ref{Lem:2.1}, we know that $X^{q}(t) \in \mathcal{M_T}$. Thus, from \eqref{eq.1e} and \eqref{eq.20}, we can obtain
\begin{equation}\label{eq.34}
\mathbb{E}\left[\sup_{0\leq s \leq t\wedge S_n}X^{2q}(t)\right]\leq C \left(1+\mathbb{E}\left[\sup_{0 \leq s \leq t\wedge S_n} X^{2}(t)\right]\right) <\infty.
\end{equation}
Then, using the Fubini theorem and the fact that $u \in C^{2}$ in $\mathcal{O}$, we have
\begin{equation} \label{eq.35}
\int_0^{t\wedge S_n} \mathbb{E}\left[\left(\sigma X^{q}(s)\frac{d u(X(s))}{d x}\right)^2\right] ds< \infty.
\end{equation}
Thus, $M_t$ is a martingale. Next, applying the product formula to the process $e^{-c(t\wedge S_n)}\,u(X_n)$, and then using \eqref{eq.32} gives
\begin{align} \label{eq.36}
d\left(e^{-c(t\wedge S_n)}\,u(X_n)\right)&=e^{-c(t\wedge S_n)}\,du(X_n)+u(X_n)\,de^{-c(t\wedge S_n)}+d\langle e^{-c(t\wedge S_n)},\,u(X_n)\rangle_{t\wedge S_n}\nonumber\\
&=e^{-c(t\wedge S_n)}\,du(X_n)-ce^{-c(t\wedge S_n)}\mathbf{1}_{\{t<S_n\}} u(X_n)dt\nonumber\\
&=e^{-c(t\wedge S_n)}\mathbf{1}_{\{t<S_n\}}\left(Lu(X_n)-cu(X_n)\right)dt+ e^{-c(t\wedge S_n)}dM_t.
\end{align}
Integrating over $[0,t\wedge S_n)]$, and taking the expectation provides
\begin{align} \label{eq.37}
\mathbb{E}^{x}\left[e^{-c(t\wedge S_n)}\,u(X_n)\right]-u(x)&=\mathbb{E}^{x}\left[\int_{0}^{t\wedge S_n}e^{-cs}\left(Lu(X(s))-cu(X(s))\right)ds\right]\nonumber\\
&=-\mathbb{E}^{x}\left[\int_{0}^{t\wedge S_n}e^{-cs}f(X(s))ds\right].
\end{align}
Using the fact that
\begin{itemize}
  \item $\lim_{n \to \infty}\,S_n=\tau_{\mathcal{O}}$,
  \item $\lim_{t \to \infty}\,t\wedge \tau_{\mathcal{O}}=\tau_{\mathcal{O}}$,
\end{itemize}
and hence, letting first $n \to \infty$ and then $t \to \infty$ in \eqref{eq.37}, and considering the continuity of the sample paths and $u$, and the boundary condition,  the Dominated Convergence theorem gives
\begin{align} \label{eq.38}
u(x)&=\mathbb{E}^{x}\left[\int_{0}^{\tau_{\mathcal{O}}} e^{-cs}\, f(X(s))ds\right],
\end{align}
which concludes the proof.
\end{proof}

\section*{Acknowledgments}
\noindent This work was supported by Athabasca University Research Incentive Account [140111 RIA].
\section*{ORCID}
\noindent https://orcid.org/0000-0002-6001-627X

\end{document}